\def\@tocline#1#2#3#4#5#6#7{\relax
  \ifnum #1>\c@tocdepth 
  \else
    \par \addpenalty\@secpenalty\addvspace{#2}%
    \begingroup \hyphenpenalty\@M
    \@ifempty{#4}{%
      \@tempdima\csname r@tocindent\number#1\endcsname\relax
    }{%
      \@tempdima#4\relax
    }%
    \parindent\z@ \leftskip#3\relax \advance\leftskip\@tempdima\relax
    \rightskip\@pnumwidth plus4em \parfillskip-\@pnumwidth
    #5\leavevmode\hskip-\@tempdima
      \ifcase #1
       \or\or \hskip 1em \or \hskip 2em \else \hskip 3em \fi%
      #6\nobreak\relax
    \dotfill\hbox to\@pnumwidth{\@tocpagenum{#7}}\par
    \nobreak
    \endgroup
  \fi}
\newtheorem{thm}{Theorem}[section]
\newtheorem{lemma}[thm]{Lemma}
\newtheorem{corollary}[thm]{Corollary}
\newtheorem{prop}[thm]{Proposition}
\newtheorem{question}[thm]{Question}
\newtheorem{thmx}{Theorem}
\theoremstyle{definition}
\newtheorem{remark}[thm]{Remark}
\DeclareMathOperator{\Aut}{\mathrm{Aut}}
\DeclareMathOperator{\Comm}{\mathrm{Comm}}
\DeclareMathOperator{\Isom}{\mathrm{Isom}}
\newcommand{\calc}{{\mathcal{C}}}
\newcommand{\calt}{{\mathcal{T}}}
\newcommand*{\fraks}{\mathfrak{S}}
\newcommand{\OO}{\mathrm{O}}
\newcommand{\CAT}{\mathrm{CAT}}
\newcommand*{\nest}{\sqsubseteq}
\newcommand*{\pnest}{\sqsubset}
\newcommand*{\trans}{\pitchfork}
\newcommand{\EE}{\mathbb{E}}
\newcommand{\ZZ}{\mathbb{Z}}
\tikzstyle{blackNode}=[fill=black, draw=black, shape=circle]
\title[Lattices, hierarchical hyperbolicity, and virtual torsion-freeness]{Lattices in a product of trees, hierarchically hyperbolic groups, and virtual torsion-freeness}
\author{Sam Hughes}
\email{sam.hughes@maths.ox.ac.uk}
\address{Mathematical Institute, Andrew Wiles Building, University of Oxford, Oxford OX2 6GG, UK}
\subjclass{20F67, 20F65}
\date{\today}
\begin{document}

\maketitle

\begin{abstract}
    We construct cocompact lattices in a product of trees which are not virtually torsion-free.  This gives the first examples of hierarchically hyperbolic groups which are not virtually torsion-free.
\end{abstract}

\section{Introduction}
Hierarchically hyperbolic groups (HHGs) and spaces (HHSs) were introduced by Behrstock, Hagen and Sisto in \cite{BehrstockHagenSisto2017a}.
Hierarchically hyperbolic groups are known to have a number of properties such as having finite asymptotic dimension \cite[Theorem~A]{BehrstockHagenSisto2017}, having a uniform bound on the conjugator length of Morse elements \cite{AbbottBehrstock2019}, and for virtually torsion-free HHGs, their uniform exponential growth is well understood \cite{AbbottNgSpriano2019}.  HHGs belong to the class of semihyperbolic groups \cite[Corollary~F]{HaettelHodaPetyt2020} (see also \cite{DurhamMinskySisto2020}).  In particular, their finitely generated abelian subgroups are undistorted, they have solvable conjugacy problem, finitely many conjugacy classes of finite subgroups, and are of type $\rm{FP}_\infty$.

That HHGs have only many finitely many conjugacy classes of finite subgroups implies that every residually finite HHG is in fact virtually torsion-free.  This motivates the question of whether there exist any HHGs which are not virtually torsion-free.  The question is of considerable interest to specialists since, for example, a number of theorems about HHGs require the assumption of virtual torsion-freeness (see for instance \cite[Theorem~1.1]{AbbottNgSpriano2019} and \cite[Theorem~1.2(3')]{RobbioSpriano2020}).

In this paper we construct an infinite family of $\CAT(0)$ lattices acting faithfully and geometrically on a product of trees.  We then prove that each lattice $\Gamma$ is a hierarchically hyperbolic group and has no finite index torsion-free subgroups.  This appears to be the first examples in the literature of cocompact lattices in a product of trees which are not virtually torsion-free (non-cocompact examples were given by Caprace and R\'emy \cite{CapraceRemy2009}).  

\begin{thmx}[Theorem~\ref{cor.HHGsNotVirTorFree}]\label{corx.HHGsNotVirTorFree}
There exist uniform lattices in products of trees which are hierarchically hyperbolic groups and which are not virtually torsion-free.
\end{thmx}

To the author's knowledge this is the first explicit example of an HHG which is not virtually torsion-free.    The author suspects that it is possible to apply the results of Hagen--Susse \cite{HagenSusse2020} to Wise's examples in \cite{Wise2007} to obtain an HHG which is not virtually torsion-free; however, the construction presented here is much more elementary and gives an explicit HHG structure.

\subsection*{Acknowledgements}
The author was supported by the Engineering and Physical Sciences Research Council grant number 2127970.  This work has received funding from the European Research Council (ERC) under the European Union’s Horizon 2020 research and innovation programme (Grant agreement No. 850930).  This paper contains material from the author's PhD thesis \cite{HughesThesis}.  The author would like to thank his PhD supervisor Ian Leary for his guidance and support.  Additionally, the author would like to thank Mark Hagen, Ashot Minasyan, Harry Petyt, and Motiejus Valiunas for helpful correspondence and conversations.  The author would also like to extend a big gratitude to Yves de Cornulier for the idea inspiring the examples in Section~\ref{sec.examples}.  Finally, the author would like to thank the anonymous referee for their extremely helpful comments which have greatly improved the exposition of this article.

\section{Definitions}\label{sec.HHG.defns}

In this section we will give the relevant background on HHSs and HHGs for our endeavours.  The definitions are rather technical so we will only focus on what we need, for a full account the reader should consult \cite[Definition~1.1, 1.21]{BehrstockHagenSisto2019}.  We will follow the treatment in \cite[Section~2]{PetytSpriano2020}.  To this end, a \emph{hierarchically hyperbolic space (HHS)} is pair $(X,\fraks)$ where $X$ is an $\epsilon$-quasigeodesic space and $\fraks$ is a set with some extra data which essentially functions as a coordinate system on $X$ where each coordinate entry is a hyperbolic space.  The relevant parts of the axiomatic formalisation are described as follows:
\begin{itemize}
    \item For each \emph{domain} $U\in\fraks$, there is a hyperbolic space $\calc U$ and \emph{projection} $\pi_U:X\to \calc U$ that is coarsely Lipschitz and coarsely onto \cite[Remark~1.3]{BehrstockHagenSisto2019}.
    \item $\fraks$ has a partial order $\nest$, called \emph{nesting}.  Nesting chains are uniformly finite, and the length of the longest such chain is called the \emph{complexity} of $(X,\fraks)$.
    \item $\fraks$ has a symmetric relation $\bot$, called \emph{orthogonality}.  The complexity bounds pairwise orthogonal sets of domains.
    \item The relations $\nest$ and $\bot$ are mutually exclusive.  The complement of $\nest$ and $\bot$ is called \emph{transversality} and denoted $\trans$.
    \item Whenever $U\trans V$ or $U\nest V$ there is a bounded set $\rho^U_V\subset \calc V$.  These sets, and projections of elements $x\in X$, are \emph{consistent} in the following sense:
    \begin{itemize}
        \item \emph{$\rho$-consistency:} Let $U,V,W\in\fraks$ such that $U\pnest V$ and $\rho_W^V$ is defined, then $\rho^U_W$ coarsely agrees with $\rho^V_W$;
        \item If $U\trans V$ then $\min\{d_{\calc U}(\pi_U(x),\rho^V_U),d_{\calc V}(\pi_V(x),\rho^U_V)\}$ is bounded.
    \end{itemize}
\end{itemize}
All coarseness may taken to be uniform so we can and will fix a uniform constant $\epsilon$ \cite[Remark~1.6]{BehrstockHagenSisto2019}. 

We remind the reader that these axioms for an HHS are not a complete set but only recall the structure we will need.  For the full definition the reader should consult \cite[Definition~1.1, 1.21]{BehrstockHagenSisto2019}.  The following definition of an HHG is however complete.

Let $X$ be the Cayley graph of a group $\Gamma$ and suppose $(X,\fraks)$ is an HHS, then $(\Gamma,\fraks)$ is a \emph{hierarchically hyperbolic group structure (HHG)} if it also satisfies the following:
\begin{enumerate}
    \item $\Gamma$ acts cofinitely on $\fraks$ and the action preserves the three relations.  For each $g\in G$ and each $U\in\fraks$, there is an isometry $g:\calc U\to \calc gU$ and these isometries satisfy $g\cdot h=gh$;
    \item for all $U,V\in\fraks$ with $U\trans V$ or $V\nest U$ and all $g,x\in\Gamma$ there is equivariance of the form $g\pi_U(x)=\pi_{gU}(gx)$ and $g\rho^V_U(x)=\rho^{gV}_{gU}(x)$.
\end{enumerate}
Note that this is not the original definition of an HHG as given in \cite{BehrstockHagenSisto2019}.  Instead, we have adopted the simpler axioms from \cite{PetytSpriano2020}.  Specifically, the axioms we have given require the equivariance to be exact rather than coarse and so imply the original axioms.  However, by \cite[Section~2.1]{DurhamHagenSisto2017Errata} if the axioms given in \cite{BehrstockHagenSisto2019} are satisfied then one can modify the HHG structure to satisfy the axioms given here.

\section{Hierarchical hyperbolicity and products} 
In this section we provide a proof of the folklore result that a group acting geometrically on a product of $\delta$-hyperbolic spaces with equivariant projections and without permuting isometric factors is an HHG. Let $X$ be a proper metric space and let $H=\Isom(X)$, then $H$ is a locally compact group with the topology given by uniform convergence on compacta.  Let $\Gamma$ be a discrete subgroup of $H$.  We say $\Gamma$ is a \emph{uniform lattice} if $X/\Gamma$ is compact.

\begin{prop}\label{prop.HHG}
Let $m>0$, $n\geq0$ and let $H\leq\Isom(\EE^n)\times\prod_{i=1}^m\Isom(X_i)$ be a closed subgroup, where each $X_i$ is a proper non-elementary $\delta$-hyperbolic space.  Let $\Gamma$ be a uniform $H$-lattice.  Suppose the projection $\pi_{\OO(n)}:\Gamma\to\OO(n)<\Isom(\EE^n)$ is trivial, then $\Gamma$ is a hierarchically hyperbolic group.
\end{prop}

\begin{proof}
Let $q$ be a $\Gamma$-equivariant quasi-isometry $\rm{Cay}(\Gamma,A)\to X$ given by the \v{S}varc-Milnor Lemma \cite[I.8.19]{BridsonHaefligerBook}.  If $n>0$, then for $j\in\{1-n,\dots,0\}$ let $X_j=\EE$ and $H_j=\Isom(\EE)$.  If $n>0$, then let $i\in\{1-n,\dots,m\}$, otherwise let $i\in\{1,\dots,m\}$.  Let $\fraks$ be the HHS structure for the product $X=\prod_{i=1-n}^mX_i$ given by \cite[Proposition~8.27]{BehrstockHagenSisto2019}.  As explained in the proof thereof every domain of $\fraks$ is either some $X_i$ for $i\in\{1-n,\dots,m\}$ or bounded (in fact a point) and labelled by $I\subseteq\{1-n,\dots,m\}$ corresponding to some non-trivial subproduct of $X$ with at least two factors.  The transversality relation is given by pairs $\{J,K\}$ of subsets of $I$ with $|J|,|K|\geq2$, and $J\cap K\neq\emptyset$. The nesting relation is given by inclusions of subproducts of $X$, and every distinctly labelled pair of domains which are not nested are orthogonal.

Note that $\fraks$ is finite and the action on $\fraks$ is trivial because $\Gamma$ does not permute isometric factors of $X$.  Indeed, $H$ which contains $\Gamma$ preserves the decomposition of $X$ and $\pi_{\OO(n)}(\Gamma)$ is trivial.  Every domain of the structure is a point or one of the $X_i$.  In the first case the $\Gamma$ action is trivial and in the second case $\Gamma$ acts via $\pi_{H_i}:\Gamma\to\Isom(X_i)$.  This immediately yields the first axiom because $\pi_{H_i}$ is a homomorphism.  The other $\rho$-consistency equivariance condition is established immediately since any two domains that are not points are orthogonal to each other.

For the second axiom consider the following diagram where the vertical arrows are given by applying the obvious group action:
\[ 
\begin{tikzcd}
{\Gamma\times\rm{Cay}(\Gamma,A)} \arrow[d] \arrow[rr, "{(\pi_{H_i},\pi_{X_i}\circ q)}"] &  & \pi_{H_i}(\Gamma)\times X_i \arrow[d] \\
{\rm{Cay}(\Gamma,A)} \arrow[rr, "\pi_{X_i}\circ q"']                                    &  & X_i .                                
\end{tikzcd}
\]
We will verify the diagram commutes.  Let $x\in\rm{Cay}(\Gamma,A)$ and $g\in \Gamma$.  First, we evaluate the composite map going down then across, we have 
\[(g,x)\mapsto gx\mapsto \pi_{X_i}(q(gx)).\]  
Going the other way we have 
\[(g,x)\mapsto (\pi_{H_i}(g),\pi_{X_i}(q(x)))\mapsto\pi_{X_i}(gq(x))=\pi_{X_i}(q(gx))\]
where the last equality is given by the $\Gamma$-equivariance of $q$. In particular, $g\pi_{X_i}(x)=\pi_{gX_i}(gx)=\pi_{X_i}(gx)$.
\end{proof}

\begin{lemma}\label{lem.HHG.FiniteExt}
If $\Gamma$ is a finite-by-(hierarchically hyperbolic group), then $\Gamma$ is a hierarchically hyperbolic group.
\end{lemma}
\begin{proof}
The group $\Gamma$ splits as a short exact sequence
\begin{equation}\label{eqn.ses}
\{1\}\rightarrowtail F\rightarrowtail\Gamma\twoheadrightarrow \Lambda\twoheadrightarrow\{1\}, 
\end{equation}
where $\Lambda$ is a hierarchically hyperbolic group and $F$ is the finite kernel of the action on the HHS $(\Lambda,\fraks)$.  Since $F$ acts trivially on $X$, it acts trivially on the HHG structure $\fraks$ for $\Lambda$.  The epimorphism $\varphi:\Gamma\twoheadrightarrow\Lambda$ induces an equivariant quasi-isometry $\psi$ on the associated Cayley graphs.  Thus, we may precompose every map in the HHG structure with $\varphi$ or $\psi$ to endow $\Gamma$ with the structure of an HHG.
\end{proof}

We restate Proposition~\ref{prop.HHG} in terms of groups acting geometrically on products of $\CAT(-1)$ spaces.  For an introduction to $\CAT(\kappa)$ groups and spaces see \cite{BridsonHaefligerBook}.  We will assume some non-degeneracy conditions on the $\CAT(0)$ spaces to avoid many technical difficulties associated with the $\CAT(0)$ condition (see \cite[Section~1.B]{CapraceMonod2009a} for a thorough explanation).  A group $H$ acting on a $\CAT(0)$ space $X$ is \emph{minimal} if there is no $H$-invariant closed convex subset $X'\subset X$.  If $\Isom(X)$ is minimal, then we say $X$ is minimal.

\begin{corollary}\label{cor.HHGCAT}
Let $\Gamma$ be a group acting properly cocompactly by isometries on a finite product of proper minimal $\CAT(-1)$-spaces.  If $\Gamma$ does not permute isometric factors, then $\Gamma$ is a hierarchically hyperbolic group.
\end{corollary}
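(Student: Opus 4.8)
The plan is to deduce this from Theorem~\ref{thm.HHGCAT} by putting the product into the shape required there. Write $Y=\prod_i Y_i$ with each $Y_i$ a proper irreducible minimal $\CAT(-1)$ space, and classify the factors according to the size of their ideal boundary. If $\partial Y_i=\emptyset$ then $Y_i$ is bounded, so its circumcentre is an $\Isom(Y_i)$-fixed point and minimality forces $Y_i$ to be a single point; the case $|\partial Y_i|=1$ cannot occur, since an unbounded space with one boundary point carries proper invariant horoballs, contradicting minimality; if $|\partial Y_i|=2$ the two ideal points are joined by a unique bi-infinite geodesic, which is $\Isom(Y_i)$-invariant, so minimality gives $Y_i\cong\RR$; and if $|\partial Y_i|\ge 3$ the factor is non-elementary. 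Discarding the point factors, grouping the $n$ line factors into a single Euclidean factor $\EE^n=\RR^n$, and relabelling the remaining (non-elementary, still proper, irreducible and minimal) factors as $X_1,\dots,X_m$, I obtain an isometry $Y\cong\EE^n\times\prod_{i=1}^m X_i$.

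I would then realise $\Gamma$ as a uniform lattice in $H:=\Isom(\EE^n)\times\prod_{i=1}^m\Isom(X_i)$, regarded inside $\Isom(Y)$ as the factor-preserving isometries. Because $\Isom(\EE^n)$ acts transitively on $\EE^n$ and each $X_i$ is minimal, the product $H$ acts minimally on $Y$; and since $\Gamma\le H$ already acts cocompactly, so does $H$. As $Y$ is proper and the action of $\Gamma$ is proper, the image of $\Gamma$ is discrete in $\Isom(Y)$, hence in $H$ (the kernel of the action is finite and is harmlessly absorbed), and $Y/\Gamma$ is compact; thus $\Gamma$ is a uniform $H$-lattice. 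Every hypothesis of Theorem~\ref{thm.HHGCAT} is now in place except the condition on the orthogonal part.

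The main obstacle is precisely this condition $\pi_{\OO(n)}(\Gamma)=1$. Since $\Gamma$ does not permute the mutually isometric line factors and therefore preserves each of them, it acts on every coordinate line $\RR_j$ through $\Isom(\RR)$, so $\pi_{\OO(n)}(\Gamma)\le\{\pm1\}^n$; this is finite but may be nontrivial, as $\Gamma$ can reflect individual lines. To conclude I would pass to the finite-index subgroup $\Gamma_0:=\ker\bigl(\pi_{\OO(n)}|_\Gamma\bigr)$, which is again a uniform $H$-lattice and satisfies $\pi_{\OO(n)}(\Gamma_0)=1$, and apply Theorem~\ref{thm.HHGCAT} to obtain an HHS structure $(Y,\fraks)$. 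The decisive observation is that this structure depends only on the factor decomposition of $Y$: its non-point domains are the coordinate lines $\RR_j$ and the factors $X_i$, pairwise orthogonal, and every $g\in\Gamma$ preserves each of them setwise (a reflection fixes its coordinate line), so the induced permutation of $\fraks$ is trivial and each $g$ induces a genuine isometry $g\colon\calc U\to\calc U$. Hence the two HHG axioms of Section~\ref{sec.HHG.defns} hold for all of $\Gamma$ — equivalently, the proof of Theorem~\ref{thm.HHGCAT} applies verbatim with $\{\pm1\}^n$ replacing the trivial group — and $\Gamma$ is a hierarchically hyperbolic group.
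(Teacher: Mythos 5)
Your overall strategy --- reduce to Theorem~\ref{thm.HHGCAT} --- is the same as the paper's, but the two arguments spend their effort on complementary hypotheses, and it is worth recording what each buys. The paper's proof concentrates entirely on the failure of faithfulness: it passes to $\Lambda=\Gamma/F$ with $F$ the (finite) kernel of the action, asserts that $\Lambda$ satisfies the hypotheses of the theorem, and then transports the HHG structure back to $\Gamma$ by precomposing all projections with the quotient map and the induced equivariant quasi-isometry of Cayley graphs. That transport step is the one place your write-up is thinner than the paper's: ``the kernel is finite and harmlessly absorbed'' is exactly the point that needs saying, since when $F\neq 1$ the group $\Gamma$ is not a subgroup of $H$ and the theorem literally applies only to $\Gamma/F$; a sentence on pulling the structure back along the finite-kernel epimorphism would close this. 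Conversely, you supply two verifications the paper leaves implicit and which are genuinely needed if the statement is read as written: the trichotomy for the factors via the size of $\partial Y_i$ (discarding points, collecting lines into $\EE^n$), and the treatment of the hypothesis $\pi_{\OO(n)}(\Gamma)=1$. The latter is a real issue: if some factors are lines and $\Gamma$ reflects one of them, then $\pi_{\OO(n)}(\Gamma)$ is a nontrivial subgroup of $\{\pm1\}^n$ and the quotient $\Lambda$ does \emph{not} satisfy the theorem's hypotheses verbatim. Your observation that the proof of Theorem~\ref{thm.HHGCAT} only uses that $\Gamma$ preserves each coordinate line and acts on it through $\Isom(\RR)$ --- so that an orthogonal part inside $\{\pm1\}^n$ is as good as a trivial one --- is correct and makes the passage to $\Gamma_0$ unnecessary, as you note. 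One small repair in your classification of factors: in the case $|\partial Y_i|=1$ the horoballs at the fixed ideal point are individually invariant only when every isometry preserves the Busemann function exactly, not merely up to an additive constant, so you should either argue that the Busemann character vanishes or reach the contradiction with minimality and cocompactness by another (standard) route.
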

\begin{proof}
The group $\Gamma$ splits as in \eqref{eqn.ses} where $\Lambda$ acts geometrically on a finite product of proper minimal $\CAT(-1)$-spaces and $F$ is the finite kernel of the action.  By Proposition~\ref{prop.HHG} we see that $\Lambda$ is an HHG and so by Lemma~\ref{lem.HHG.FiniteExt} $\Gamma$ is an HHG as well.
\end{proof}

The author suspects it is possible to strengthen the corollary to allow for permuting isometric factors provided the projection of $\Gamma$ to $\OO(n)<\Isom(\EE^n)$ is contained in $\OO_n(\ZZ)$.  To prove a converse to this corollary one may need to investigate the commensurators of maximal abelian subgroups of a hierarchically hyperbolic group $\Gamma$.  Indeed, the $\CAT(0)$ not biautomatic groups introduced by Leary--Minasyan \cite{LearyMinasyan2019} and the groups constructed by the author in \cite{Hughes2021a} (see also \cite{Hughes2021c} and \cite{HughesThesis}) have undistorted maximal abelian subgroups which have infinite index in their commensurator and are not virtually normal.  All of these groups have a non-discrete projection to $\OO(n)$.

\begin{question}
Is a maximal abelian subgroup $A$ of a hierarchically hyperbolic group $\Gamma$ either finite index in its commensurator $\Comm_\Gamma(A)$ or virtually normal?
\end{question}

\section{Non-virtually torsion-free lattices} \label{sec.examples}

In this section we will construct a cocompact lattice in a product of trees which is not virtually torsion-free.

Let $\Lambda$ be a Burger-Mozes simple group \cite{BurgerMozes1997,BurgerMozes2000struct,BurgerMozes2000lat} acting on $\calt_1\times\calt_2$ splitting as an amalgamated free product $F_n\ast_{F_m}F_n$ with embeddings $i,j:F_m\to F_n$.  This defines a group $\Lambda$ which embeds discretely into the product of $T_1=\Aut(\calt_1)$ and $T_2=\Aut(\calt_2)$ with compact quotient.  For instance one may take Rattaggi's example of a lattice in the product of an $8$-regular and $12$-regular tree which splits as $F_7\ast_{F_{73}}F_7$  \cite{Rattaggi2007a} (see also \cite{Rattaggi2007b}) or one of Radu's examples \cite{Radu2020}.

Define $A=\ZZ_p\rtimes F_n$ for $p$ prime such that the $F_n$-action is non-trivial\footnote{The key point here is that $F_n$ will normally generate $A$, in particular, other finite groups with non-trivial $F_n$-action could be used here.}.  Consider the embeddings $\widetilde{i},\widetilde{j}:F_m\rightarrowtail F_n\rightarrowtail A$ given by the composition of $i$ or $j$ with the obvious inclusion.  Now, we build a group $\Gamma$ as an amalgamated free product $A\ast_{F_m}A$, note that $\Gamma$ surjects onto the original Burger-Mozes group $\Lambda$ with kernel the normal closure of the torsion elements. Let $\calt_3$ denote the Bass-Serre tree of $\Gamma$ and let $T_3$ denote the corresponding automorphism group.

\begin{prop}\label{prop.ex.lat}
$\Gamma$ is a uniform $(T_1\times T_3)$-lattice which does not permute the factors.
\end{prop}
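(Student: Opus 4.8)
The plan is to realise $\Gamma$ as a discrete cocompact subgroup of $T_1\times T_3$ through a single homomorphism $\rho\colon\Gamma\to T_1\times T_3$ assembled from two genuinely different actions. On the second factor, $\Gamma=A\ast_{F_m}A$ acts on its own Bass--Serre tree $\calt_3$, giving $\Gamma\to T_3=\Aut(\calt_3)$ with vertex stabilisers the conjugates of the two copies of $A$ and edge stabilisers the conjugates of $F_m$; since $[A:F_m]=p\,[F_n:F_m]<\infty$ the tree $\calt_3$ is locally finite and $T_3$ is locally compact. On the first factor I use the quotient $\Gamma\twoheadrightarrow\Lambda$ (with kernel $N$ the normal closure of the torsion), followed by the given embedding $\Lambda\hookrightarrow T_1\times T_2$ and projection to $T_1$, yielding $\Gamma\to\Aut(\calt_1)=T_1$. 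Taking the product of these gives $\rho$. By construction $\rho$ lands in $T_1\times T_3$ and preserves each coordinate, so $\Gamma$ does not permute the factors; one can reinforce this by noting that the two trees have different valences and hence are not isometric.

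The one structural input everything rests on is that the vertex group $F_n=\Lambda_w$, for $w$ a vertex of the amalgam tree $\calt_2$ of $\Lambda=F_n\ast_{F_m}F_n$, acts \emph{freely and cocompactly} on $\calt_1$. Freeness is immediate from torsion-freeness of the Burger--Mozes lattice: a stabiliser $(\Lambda_w)_v=\Lambda_{(v,w)}$ is the stabiliser of the vertex $(v,w)$ of $\calt_1\times\calt_2$, which is trivial since $\Lambda$ acts freely on vertices. Cocompactness is the vertical--horizontal structure of the example: $\Lambda$ acts cocompactly on $\calt_1\times\calt_2$, so $\Lambda_w$ has finitely many orbits on $\calt_1\times\{w\}\cong\calt_1$. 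The hard part of the whole argument is really this transfer of properness and cocompactness from $\Lambda\curvearrowright\calt_1\times\calt_2$ to the vertex group acting on a single factor, and I expect it to be the main obstacle; the two halves below both reduce to it.

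For discreteness I would show that $\Gamma$ acts properly discontinuously on the proper space $\calt_1\times\calt_3$, equivalently that point stabilisers are finite. After conjugating I may assume the $\calt_3$-coordinate is a base vertex with $\Gamma$-stabiliser $A$, so the stabiliser of $(v_1,v_3)$ equals the $A$-stabiliser of $v_1$ under the action of $A$ on $\calt_1$. This action factors through $A\twoheadrightarrow F_n=\Lambda_w$ with kernel $\ZZ_p$, and $F_n$ acts freely on $\calt_1$ by the previous paragraph; hence the stabiliser is exactly the finite group $\ZZ_p$. In particular $\rho$ is injective: any element of $\ker\rho$ lies in such a $\ZZ_p$ yet fixes all of $\calt_3$, while a nontrivial element of $\ZZ_p\le A$ moves the edges at the base vertex of $\calt_3$ because $\ZZ_p\cap F_m=\{1\}$; thus $\ker\rho=\{1\}$ and $\rho(\Gamma)$ is discrete in $T_1\times T_3$.

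Finally, for cocompactness I would count $\Gamma$-orbits of vertices of $\calt_1\times\calt_3$. There are two $\Gamma$-orbits of vertices in $\calt_3$; fixing a representative $v_3$ with stabiliser $A$, the $\Gamma$-orbits of vertices $(v_1,v_3)$ correspond to $A$-orbits of vertices $v_1\in\calt_1$, of which there are finitely many since $A$ acts on $\calt_1$ through the cocompact action of $F_n$. Hence $\Gamma$ has finitely many orbits of vertices and acts cocompactly on $\calt_1\times\calt_3$. Since $T_1\times T_3$ acts properly and cocompactly on the proper space $\calt_1\times\calt_3$, and $\Gamma$ is a discrete subgroup acting cocompactly, the quotient $(T_1\times T_3)/\Gamma$ is compact by the standard proper--cocompact criterion; therefore $\Gamma$ is a uniform $(T_1\times T_3)$-lattice that does not permute the factors, as required.
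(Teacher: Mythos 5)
Your proof is correct and follows essentially the same route as the paper: act on $\calt_3$ via the Bass--Serre tree of $A\ast_{F_m}A$ and on $\calt_1$ via the surjection $\Gamma\twoheadrightarrow\Lambda$, then check that the diagonal action is proper, cocompact and faithful. You simply supply the details (vertex stabilisers being copies of $\ZZ_p$, cocompactness of $\Lambda_w=F_n$ on $\calt_1$, and the $\ZZ_p\cap F_m=\{1\}$ argument for faithfulness) that the paper's proof asserts without elaboration.
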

This can be easily deduced by endowing $\Gamma$ with a graph of lattices structure in the sense of \cite[Definition~3.2]{Hughes2021a} and then applying \cite[Theorem~A]{Hughes2021a}.  Instead we will provide a direct proof.
\begin{proof}
The group $\Gamma$ acts on its Bass-Serre tree $\calt_3$ and also on $\calt_1$ via the homomorphism $\psi:\Gamma\to T_1$ defined by taking the composition of the surjection $\Gamma\twoheadrightarrow\Lambda$ and the projection $T_1\times T_2\to T_2$.  The diagonal action on the product space $\calt_1\times \calt_3$ is properly discontinuous cocompact and by isometries.  Indeed, the action is clearly cocompact since $\calt_1/\Gamma=\calt_1/\Lambda$ is a finite graph and $\calt_3/\Gamma$ is a finite graph by construction.  The action is properly discontinuous since by construction the only elements which fix a point in $\calt_1\times\calt_3$ are finite order and every torsion subgroup is finite. The kernel of the action is trivial, since the only elements which could act trivially are the torsion elements.  However, these all clearly act non-trivially on $\calt_3$ by elementary Bass-Serre theory.  Thus, the action is faithful.  We conclude that $\Gamma$ is a uniform $(T_1\times T_3)$-lattice.
\end{proof}

It remains to show $\Gamma$ is not virtually torsion-free.  

\begin{prop}\label{prop.ex.tf}
The group $\Gamma$ has no proper finite-index subgroups and contains torsion.  In particular, $\Gamma$ is not virtually torsion-free.
\end{prop}

The author thanks Yves de Cornulier for the following argument.

\begin{proof}
Note that since $F_n$ acts non-trivially on $\ZZ_p$ it follows that $F_n$ normally generates $A$.  Because the Burger--Mozes subgroup $\Lambda < \Gamma$ is a simple group, every finite index normal subgroup of $\Gamma$ contains it. Thus, their intersection $\Gamma^{(\infty)}=\bigcap_{[\Gamma:\Gamma']<\infty}\Gamma'$ contains $\Lambda$.  It follows, both copies of $F_n$ are contained in $\Gamma^{(\infty)}$.  Now, $F_n$ normally generates $A$, so $\Gamma^{(\infty)}=\Gamma$.  In particular, $\Gamma$ has no proper finite-index subgroups.  Since, $A$ is not torsion-free, we conclude that $\Gamma$ is not virtually torsion-free.
\end{proof}

To summarise we have the following theorem.

\begin{thm}[Theorem~\ref{corx.HHGsNotVirTorFree}]\label{cor.HHGsNotVirTorFree}
The group $\Gamma$ is a cocompact lattice in a product of trees, is a hierarchically hyperbolic group, and is not virtually torsion-free.
\end{thm}
\begin{proof}
By Proposition~\ref{prop.ex.lat} and Corollary~\ref{cor.HHGCAT} we see that $\Gamma$ is a hierarchically hyperbolic group.  By Proposition~\ref{prop.ex.tf} we see $\Gamma$ is not virtually torsion-free.
\end{proof}

\begin{remark}
In \cite[Corollary~8.7]{Hughes2021a} the author gave a way to use A.~Thomas's construction in \cite{Thomas2006} to promote lattices in products of trees to lattices in products of ``sufficiently symmetric'' right-angled buildings.  Applying \cite[Corollary~8.7]{Hughes2021a} to one of the non-virtually torsion-free lattices $\Gamma$ we obtain a non-virtually torsion-free lattice $\Lambda$ acting on a product of ``sufficiently symmetric'' right-angled hyperbolic buildings each not quasi-isometric to a tree.  Moreover, by Corollary~\ref{cor.HHGCAT} $\Lambda$ is hierarchically hyperbolic.
\end{remark}

\AtNextBibliography{\small}
\printbibliography

\end{document}